\documentclass[12pt]{amsart}
\usepackage{graphicx, fullpage}
\usepackage{amsmath,amssymb,amsthm,amscd, bm}
\usepackage{fancyhdr}
\usepackage[mathscr]{eucal}
\usepackage{amsfonts}
\usepackage[T1]{fontenc}
\usepackage{xspace}
\usepackage{esint}
\theoremstyle{plain}
\newtheorem {thm}{Theorem}[section]
\newtheorem {lem}[thm]{Lemma}

\newtheorem {prop}[thm]{Proposition}
\numberwithin{equation}{section}

\newcommand{\bC}{\mathbb C}

\newcommand{\bP}{\mathbb P}
\newcommand{\bN}{\mathbb N}

\newcommand{\bR}{\mathbb R}

\newcommand{\var}{\varepsilon}

\newcommand{\GL}{\text{GL}}

\newcommand{\re}{\text{Re}\,}

\newcommand{\sff}{\mathsf f}
\newcommand{\sgg}{\mathsf g}
\newcommand{\sh}{\mathsf h}
\newcommand{\sk}{\mathsf k}

\newcommand{\sH}{\mathsf H}
\newcommand{\sK}{\mathsf K}

\begin{document}
\title{\bf On a construction of hermitian metrics on holomorphic vector bundles}
\author{L\'aszl\'o Lempert}
\address{Department of  Mathematics,
Purdue University, 150N University Street, West Lafayette, IN
47907-2067, USA}
\subjclass[2020]{14D99, 32J27, 32L05, 51M15}

%\thispagestyle{empty}
%\end{titlepage}
\abstract
With any holomorphic vector bundle $E$ over a compact base one can associate a line bundle, denoted
$O_{\bP E}(1)$. According to a conjecture of Griffiths, if $O_{\bP E}(1)$ admits a positively curved hermitian
metric $k$, then $E$ also admits a positively curved hermitian metric, $h$. In this paper we show that, while
the conjecture may be correct, it is not possible to obtain $h$ out of $k$ by a fiberwise construction that is 
functorial.
\endabstract
\maketitle
\section{Introduction}    % Section 1

The ``construction'' in the title refers to one motivated by a conjecture of Griffiths. 

With any holomorphic vector bundle $E$ over a complex manifold $S$
one can associate a holomorphic line bundle $L\to\bP E$. In our notation the base 
$\bP E=\coprod_{s\in S}\bP E_s$ is the set of
subspaces of codimension one (``hyperplanes'', in what follows) in various fibers $E_s$, on which the local
trivializations of $E$ induce the structure of a complex manifold, a locally trivial holomorphic fiber bundle
$\pi:\bP E\to S$. The fiber of $L$ over $P\in\bP E_s$ 
is $E_s/P$. More precisely, the pullback bundle
$\pi^*E\to\bP E$ contains a tautological subbundle $H$ of corank $1$, whose fiber over ${P}\in\bP E_s$---a hyperplane in $ E_s$---is (the
pullback of) ${P}\subset E_s$, and $L=(\pi^*E)/H$. Our notation for $L$ is $O_{\bP E}(1)$; its restriction
to $\bP E_s$ is the hyperplane section bundle $O_{\bP E_s}(1)$.

One way to formulate Griffiths' conjecture in \cite{G69} is, assuming $S$ compact, that if $E$ is ample as defined by
Hartshorne \cite{H66}, then it is positive. Another, equivalent way, the one we will work with, is that if 
$O_{\bP E}(1)\to\bP E$ admits a smooth hermitian metric of positive curvature, then $E$ also admits one. (The
converse implication is easy to show.) Proofs have been attempted repeatedly, but for $\dim S\ge 2$ 
the conjecture is still
open. The case of $S$ a Riemann surface has been settled by Umemura \cite{U73}, though.

An appealing approach to the conjecture is to try to produce a positively curved hermitian metric $h$ on $E$ out of a
positively curved hermitian metric $k$ on $O_{\bP E}(1)$ by a {\sl fiberwise} construction. 
What this means is, roughly, that with each 
complex vector space $V$ of dimension $r=\text{rk}\, E$ and each positively curved hermitian metric $\sk$ on
$O_{\bP V}(1)$ one should associate a hermitian metric $\sh$ on $V$. Applied to the fibers $E_s=V$ this then
associates  with $k$ a hermitian metric $h$ on $E$. The hope is that if the fiberwise construction $\sk\mapsto\sh$ 
is cleverly chosen, $h$ will be smooth and positively curved. A strong result by Berndtsson on Griffiths' conjecture 
was obtained by a similar fiberwise construction, \cite[Theorem 7.1] {B09}.
Out of a positively curved hermitian
metric $\sk$ on $O_{\bP V}(1)$ Berndtsson constructs a Hermitian metric not on $V$ but on $V\otimes\det V$. 
Accordingly, out of $k$ on $O_{\bP E}(1)$ he obtains a hermitian metric $h$ on $E\otimes\det E$, which he proves 
to have positive curvature if $k$ had positive curvature.\footnote{Assuming $S$ is projective, Mourougane and Takayama
obtained the same result in \cite{MT07} by a somewhat related construction, which, however, is not entirely fiberwise.} 
In a similar vein, in \cite{L26a} we associated a
hermitian metric on $V$ itself with a hermitian metric on $O_{\bP V}(1)$ by solving a variational problem; but  
we found that the resulting construction of a hermitian metric $h$ on $E$ out of a positively curved hermitian metric
on $O_{\bP E}(1)$ does not always produce even a semipositively curved $h$ \cite[section 10]{L26a}.

In this paper we show that this failure is not accidental, and all fiberwise constructions, as long as they are functorial,
of a hermitian metric $h$ on $E$ out of positively curved $k$ will some of the time produce an $h$ whose curvature fails to be even semipositive.
We formulate a precise theorem in the next section.

Kuang--Ru Wu made a number of comments and suggestions on the first draft of this paper, many of which have
been incorporated in the current version.

\section{Terminology, notation, and the theorem} %section 2

If $V$ is a finite dimensional complex vector space, by a hermitian metric on $V$ we mean a function 
$\sh:V\to\bR$ that can be written with a positive definite hermitian form $\hat h:V\times V\to\bC$ as $\sh(v)=\hat h(v,v)$.
By a hermitian metric, or just a metric, on a holomorphic vector bundle $E\to S$ we mean a function
$h:E\to\bR$ that restricts to each fiber as a hermitian metric in the sense above. If $h$ is $C^\infty$, we talk
about a smooth metric. The curvature of the metric $h$ is seminegative if $h$ or, equivalently $\log h$, is 
plurisubharmonic on $E$. The curvature is negative if $h$ is smooth and strongly plurisubharmonic away
from the zero section
($i\partial\bar\partial h>0$). The curvature of $h$ is (semi)positive if the dual metric $h^*$ on the dual bundle
$E^*\to S$ has (semi)negative curvature. Often it is convenient to say positively etc. curved instead of having positive etc.
curvature.

We denote by $\sH_V$ the set of hermitian metrics on the vector space $V$, and by $\sK_V$ the set of positively
curved metrics on $O_{\bP V}(1)$. An isomorphism $\phi:V'\to V$ of vector spaces induces a map 
$\phi_\sH:\sH_V\to\sH_{V'}$ by pullback: $\phi_\sH\sh=\sh\circ\phi$, $\sh\in\sH_V$. There is also a
map $\phi_\sK:\sK_V\to\sK_{V'}$ obtained as follows: $\phi$ induces isomorphisms
$V'/{P}\to V/\phi{P}$ of the quotients, ${P}\in\bP V'$, which together
define an isomorphism $O_{\bP V'}(1)\to O_{\bP V}(1)$; the pullback of $\sk\in\sK_V$ along this isomorphism is
then $\phi_\sK \sk$.

Self isomorphisms $V\to V$ form a group $\GL(V)$, and 
$\phi_\sH,\phi_\sK$ for $\phi\in\GL(V)$ define left actions of $\GL(V)$ on $\sH_V$, $\sK_V$. 
Suppose we are given
a $\GL(\bC^r)$ equivariant map $\Psi:\sK_{\bC^r}\to\sH_{\bC^r}$, i.e.,
\[
\phi_{\sH}\circ\Psi=\Psi\circ\phi_{\sK},\qquad \phi\in\GL(\bC^r).
\]
Equivariance has the effect that the association $\sk\mapsto\Psi\sk$ depends only on the vector space structure of
$\bC^r$ and not on the choice of coordinates. Accordingly,
our $\Psi$ can be used to construct $\GL(V)$ equivariant maps $\Psi_V:\sK_V\to\sH_V$ for all $r$ dimensional
vector spaces $V$, by choosing an isomorphism $\psi:\bC^r\to V$, and letting 
$\Psi_V=\psi_\sH^{-1}\circ\Psi\circ\psi_\sK$. Since $\Psi$ was equivariant, it does not matter which isomorphism
$\psi$ we use here. If $E\to S$ is now a holomorphic vector bundle of rank $r$, and $k$ is a metric on
$O_{\bP E}(1)$ whose restrictions to $O_{\bP E_s}(1)$ have positive curvature, we define a metric $h$ on $E$ by 
\[
h|E_s=\Psi_{E_s}\big(k|O_{\bP E_s}(1)\big),\qquad s\in S,
\]
that we denote $h=\Psi k$.

\begin{thm} %2.1
Let $S$ be a positive dimensional projective algebraic manifold and $r=2,3,\dots$. There are a holomorphic vector bundle $E\to S$ of rank $r$ and a positively curved metric $k$ on $O_{\bP E}(1)$ such that for no equivariant 
$\Psi:\sK_{\bC^r}\to\sH_{\bC^r}$ has the metric $\Psi k$ on $E$ semipositive curvature.
\end{thm}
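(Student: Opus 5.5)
The plan is to make equivariance pin down $\Psi k$ completely, up to one unknown positive constant, by choosing $k$ whose fiberwise restrictions have a large symmetry group. Fix $p\in(1,\infty)$, $p\neq 1$, and on $\bC^r$ consider the metric $\sk_p$ on $O_{\bP\bC^r}(1)$ given by the $\ell^p$-type function $\xi\mapsto(\sum_i|\xi_i|^{2p})^{1/p}$ on the dual (the total space of $O(-1)$-duals, i.e.\ lines in $(\bC^r)^*$). For suitable $p$ this is positively curved on $\bP\bC^r$, so $\sk_p\in\sK_{\bC^r}$. Its stabilizer in $\GL(\bC^r)$ contains the monomial unitary group $G$ of diagonal unitaries times permutations. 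Since $G$ is compact and acts irreducibly on $\bC^r$, equivariance gives $\phi_\sH\Psi\sk_p=\Psi\sk_p$ for $\phi\in G$. By Schur's lemma, the only $G$-invariant hermitian forms are multiples of the standard one, so $\Psi\sk_p=c\,|\cdot|^2$ for some $c>0$ depending only on $\Psi$. Equivariance under scalars shows that $\Psi(\lambda\sk_p\circ A)=c\lambda|A^{-1}\cdot|^2$ (with the appropriate dual convention) for every $A\in\GL(\bC^r)$ and $\lambda>0$.

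Next I build $E$ and $k$ so that every fiber metric $k|O_{\bP E_s}(1)$ is of this conjugate form. I take $E=L\otimes\bC^r$ with $L$ ample on the projective manifold $S$ carrying a positively curved metric $h_L$, choose a smooth map $A:S\to\GL(r)$, and set
\[
k(\xi)=h_L^*\otimes\|A(s)\xi\|_{\ell^p}^2 .
\]
By the first paragraph, for every equivariant $\Psi$ we get $\Psi k=c\,h_L\otimes g_A$, where $g_A(v)=|A(s)^{-1}v|^2$-type is a hermitian metric on the trivial bundle. Hence $\Psi k$ is semipositive for some $\Psi$ iff it is so for all $\Psi$, iff $h_L\otimes g_A$ is Griffiths semipositive; the constant $c$ is irrelevant. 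Replacing $L$ by $L^m$ rescales both curvatures in the same direction. So the real content is to choose $A$, $p$ and $h_L$ such that:
\begin{enumerate}
\item the curvature of $g_A$ at some point and in some direction is more negative than that of $h_L$ can compensate, making $h_L\otimes g_A$ not semipositive;
\item nevertheless $k$ is positively curved on $\bP E$.
\end{enumerate}
Everything is local on $S$, so I would first work near a point with $A(s)=I+(\text{quadratic in } s,\bar s)$ and then globalize by a cutoff, using ampleness of $L$ to absorb errors away from the point.

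The main obstacle is this last curvature comparison. One must show that the $\ell^p$-structure gives $O_{\bP E}(1)$ strictly more positivity in the horizontal directions than the $\ell^2$ metric induced by $g_A$, whose positivity on $O(1)$ would be equivalent to Griffiths positivity of $g_A\otimes h_L$. I would compute $i\partial\bar\partial\log\|A(s)\xi\|_p^2$ at a point where $A=I$, $dA=0$. The horizontal term is then an average of the second variation of $A$ weighted by $|\xi_i|^{2p}$ (rather than by $|\xi_i|^2$ as in the hermitian case). The non-diagonal parts of $\partial\bar\partial A$ therefore enter differently, and the vertical Hessian of the $\ell^p$ function controls the mixed terms. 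I would try first to choose the second-order part of $A$ off-diagonal, for instance rotating $e_1$ toward $e_2$. The aim is that the Griffiths curvature $\langle\Theta v,v\rangle$ of $g_A$ is strongly negative for $v=e_1+e_2$, whereas in the $\ell^p$ picture this direction is penalized only in a way that the $L$-curvature (scaled suitably) dominates. If this fails for $p>1$, I would try $p<1$ or $p\to\infty$ (max-type metrics, smoothed), where the gap between the two notions of positivity should be largest.
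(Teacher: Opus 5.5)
Your reduction is correct. If every fiber metric of $k$ is, up to a positive factor, a $\GL(\bC^r)$-translate of one $\sk$ whose stabilizer contains the monomial unitary group, then equivariance and Schur's lemma give $\Psi k=c\,h_L\otimes g_A$ for every $\Psi$. But this only reformulates the theorem. The whole difficulty is to produce $A$, $\sk$, $h_L$ such that $k$ is positively curved while $h_L\otimes g_A$ is not semipositive, and your proposal leaves exactly this step as a list of things to try, so there is a genuine gap.

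Your first concrete suggestion, rotating $e_1$ toward $e_2$, goes the wrong way if the rotation is unitary. A unitary $A$ leaves $g_A$ flat (to second order at the point), so $h_L\otimes g_A$ stays positive there. Rotations are isometries of the euclidean norm but not of the $\ell^p$ norm, so it is the Finsler side that loses positivity.

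The missing idea is the one that drives the paper: a linear map that strictly contracts the symmetric non-hermitian norm but does not contract the euclidean one. In the paper this is $\phi(v)=(3v_1/4+v_2/5,\,3v_1/4-v_2/5)$, which contracts for $\ell^\infty$ but expands $e_1$ in $\ell^2$. In your infinitesimal setting the requirement reads as follows.
\begin{itemize}
\item Write the duals of $k$ and $\Psi k$ on $E^*$ locally as $e^{\varphi}\sgg(B(s)\beta)$ and (up to a constant) $e^{\varphi}|B(s)\beta|^2$. Here $\sgg$ is your $\ell^p$-type function on $(\bC^r)^*$ and $B$ is your $A$ up to duality conventions.
\item Take $B(s)=\exp(\|s\|^2Z)$ near $s=0$. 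Since $dB(0)=0$, the mixed $(s,\beta)$ terms vanish at $s=0$.
\item The horizontal Levi form there is then $i\partial\bar\partial\varphi(0)+\frac{d}{dt}\big|_{t=0}\log N(e^{tZ}\beta)\; i\partial\bar\partial\|s\|^2$, with $N=\sgg$, respectively $N=|\cdot|^2$.
\end{itemize}
So you need $Z$ such that $e^{-tZ}$ contracts $\sqrt{\sgg}$ strictly faster than the euclidean norm, i.e.\ has a smaller logarithmic norm. Such $Z$ exist. Take $\sqrt{\sgg}$ close to the $\ell^\infty$ norm (your $p$ large, suitably regularized), and let $-Z$ be the generator $(x_1,x_2)\mapsto(-\epsilon x_1,\,2x_1-(2+\epsilon)x_2)$ with small $\epsilon>0$. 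Its flow is $e^{-\epsilon t}$ times the map replacing $x_2$ by a convex combination of $x_1$ and $x_2$, hence strictly $\ell^\infty$-contracting. Yet the top eigenvalue of its hermitian part is $\sqrt2-1-\epsilon>0$. Without an ingredient of this kind the argument has no content at its decisive step.

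Two further points also need repair.
\begin{itemize}
\item Globalization is not a matter of absorbing errors by ampleness. The compactly supported variation of $A$ necessarily makes the Finsler side fail to be plurisubharmonic somewhere, so $\Theta_{h_L}$ must dominate there. At the same time it must stay below the euclidean threshold at the chosen point. Passing to $L^m$ cures $h_L\otimes g_A$ as well, so it cannot help. You need a metric on $L$ that is (nearly) flat near the point and strictly positive where $A$ is cut off, which is what the paper's Lemma 5.1 provides.
\item For $p\neq1$ the function $(\sum|\xi_i|^{2p})^{1/p}$ is either not smooth or has degenerate Levi form along the coordinate hyperplanes, so $\sk_p\notin\sK_{\bC^r}$ as stated. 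You need a regularization invariant under the monomial unitary group, e.g.\ adding $\delta|\xi|^2$ for integer $p$.
\end{itemize}
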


The result is analogous to \cite[Theorem 1.4]{L26b}, that is about associating hermitian ellipsoids with convex domains in
$\bC^n$. The main ideas of the proofs are similar.

Theorem 2.1 should not be taken as an indication that Griffiths' conjecture might be wrong. The bundle $E$ of the 
theorem is $\Lambda^{\oplus r}$, with $\Lambda\to S$ a positive line bundle, and definitely admits positively
curved metrics $h$. These $h$ may even be of form $\Psi\kappa$ for some positively curved metric $\kappa$
on $O_{\bP E}(1)$.---As Wu points out, they will necessarily be, for a certain, natural, choice  of $\Psi$ 
\cite{W26}.---All that the theorem brings to Griffiths' conjecture is that the conjecture cannot be proved
by taking an arbitrary positively curved metric $k$ on $O_{\bP E}(1)$, and passing to the metric $\Psi k$
with some cleverly chosen $\Psi$.\footnote{ It is worthwhile to compare this impossibility with Wu's proof of
\cite[Theorem 4]{W23}, by a functorial fiberwise construction of a positively curved metric $h$ on $E$, out of a 
positively curved $k$ on $O_{\bP E}(1)$ 
and a further hermitian metric on $E\otimes\det E^*$, satisfying
a curvature constraint.}

These considerations lead to the questions: Given an equivariant $\Psi:\sK_{\bC^r}\to\sH_{\bC^r}$, how are
the curvatures of $k$ and $\Psi k$ related? What property of $k$ will imply that $\Psi k$ has at least semipositive
curvature? At this point we have no meaningful answers.

\section{Basic constructions} %section 3

The material collected here is not new.---For any vector bundle $F\to T$ we will identify its zero section with the 
base $T$.

If $V$ is a finite dimensional complex vector space, there is a holomorphic map
$\pi_V:O_{\bP V}(-1)=O_{\bP V}(1)^*\to V^*$ defined as follows. Let ${P}\subset V$ be a hyperplane and 
$\xi\in O_{\bP V}(-1)_{P}$. Thus $\xi$ is a linear form $O_{\bP V}(1)_{P}=V/{P}\to\bC$. Composition
with projection $V\to V/{P}$ yields a linear form $\pi_V(\xi):V\to\bC$, vanishing on ${P}$, i.e., 
an element of $V^*$. The
restriction of $\pi_V$ to the complement of the zero section is a biholomorphism 
$O_{\bP V}(-1)\setminus\bP V\to V^*\setminus\{0\}$.

We now move to the construction of metrics on $O_{\bP V}(\pm1)$.  Consider a function $\sff:V\to[0,\infty)$ that
is convex, absolutely $2$-homogeneous in the sense that 
\[
\sff(\lambda v)=|\lambda|^2\sff(v),\qquad \lambda\in\bC,\quad v\in V;
\]
away from $0$ is positive, smooth, and strongly convex, { i.e., its Hessian is positive definite}. 
Thus $\sqrt\sff$ is a norm on $V$, smooth away from $0$. An element of
the fiber $O_{\bP V}(1)_{P}=V/{P}$ is a translate $v+{P}\subset V$ of ${P}$ ($v\in V$), and
\begin{equation} %3.1
\sk_\sff(v+{P})=\min_{v+{P}} \sff
\end{equation}
defines a metric on $O_{\bP V}(1)$. Next, denote by $\langle\, , \rangle:V^*\times V\to\bC$ the duality pairing.
The Legendre transform $\sgg:V^*\to[0,\infty)$ of $\sff$ is
\begin{equation} %3.2
\sgg(\beta)=\sup_{v\in V}\re\langle\beta,v\rangle-\sff(v),\qquad\beta\in V^*.
\end{equation}
It is smooth, positive, and strongly convex on $V^*\setminus\{0\}$, and absolutely $2$-homogeneous, for 
\[
|\lambda|^2\sgg(\beta)=|\lambda|^2\sup_{v\in V}\big(\re\langle\beta,v/\bar\lambda\rangle-\sff(v/\bar\lambda)\big)
=\sup_{v\in V}\big(\re\langle\lambda\beta, v\rangle-\sff(v)\big)=\sgg(\lambda\beta),
\]
at least if $\lambda\neq0$; but the end result holds for $\lambda=0$, too.
Thus $\sqrt\sgg$ is the norm on $V^*$, dual to $\sqrt\sff$; if $\sff$ is a hermitian metric on $V$, then $\sgg$ is the 
dual hermitian metric. The pullback of $\sgg$ by $\pi_V$ defines a smooth metric 
$\sgg\circ\pi_V:O_{\bP V}(-1)\to[0,\infty)$.

\begin{prop} %3.1
The metrics $\sk_\sff$ and $\sgg\circ\pi_V$ are duals of one another.
\end{prop}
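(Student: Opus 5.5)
Since both metrics live on lines, duality can be checked one fiber at a time. Fix a hyperplane $P\in\bP V$. The fiber $O_{\bP V}(1)_P=V/P$ is one dimensional, and on a line the dual of a metric $\sk$ is $\sk^*(\xi)=\sup\{|\xi(w)|^2/\sk(w): w\neq0\}$. Equivalently, $\sk^*(\xi)=|\xi(w)|^2/\sk(w)$ for any single nonzero $w$. So it suffices to show that for each $\xi\in O_{\bP V}(-1)_P$ and each $w=v+P\neq 0$ in $V/P$,
\[
\sgg(\pi_V\xi)=\frac{|\xi(v+P)|^2}{\min_{v+P}\sff}.
\]
Write $\beta=\pi_V\xi\in V^*$. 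Then $\beta$ vanishes on $P$ and $\langle\beta,u\rangle=\xi(v+P)$ for every $u\in v+P$. If $\xi=0$, both sides vanish, so assume $\xi\ne0$, i.e. $\beta\ne0$ with $\ker\beta=P$.

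I will compute $\sgg(\beta)$ directly from (3.2) by slicing $V$ into the cosets of $P$. Every $u\in V$ with $\langle\beta,u\rangle\neq0$ can be written as $u=\lambda u_0$, where $u_0\in v+P$ and $\lambda=\langle\beta,u\rangle/\langle\beta,v\rangle\in\bC$. Points of $P$ contribute $-\sff(u)\le0$ to the supremum, and they can be ignored because the supremum is at least $0$ (take $u=0$). Put $m=\min_{v+P}\sff=\sk_\sff(v+P)>0$ and $c=\langle\beta,v\rangle=\xi(v+P)$. Since $\sff(\lambda u_0)=|\lambda|^2\sff(u_0)$, for fixed $\lambda$ the supremum of $\re(\lambda c)-|\lambda|^2\sff(u_0)$ over $u_0\in v+P$ is $\re(\lambda c)-|\lambda|^2 m$. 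This reduces (3.2) to
\[
\sgg(\beta)=\sup_{\lambda\in\bC}\big(\re(\lambda c)-|\lambda|^2m\big).
\]
This is an elementary one-variable maximization. The optimum is at $\lambda=\bar c/(2m)$, giving $|c|^2/(4m)$.

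The only real obstacle is normalization. The computation gives $\sgg(\beta)=|c|^2/(4m)$, which is off by a factor of $4$ from the identity I want. Indeed, with the convention (3.2), the Legendre transform of $\sff(v)=|v|^2$ on $\bC$ is $|\beta|^2/4$, not $|\beta|^2$. This contradicts the paper's remark that the Legendre transform of a hermitian metric is the dual metric. So I will check whether the intended convention is the real pairing $2\re\langle\beta,v\rangle$, or whether duality of metrics is meant up to this fixed constant. With the factor $2$ the computation gives exactly $|c|^2/m$, which is $\sk_\sff^*(\xi)$. I will present the proof with whichever normalization makes the paper's remark about hermitian metrics true, and note that the argument is otherwise insensitive to the constant.

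A second point needs justification: the minimum in (3.1) is attained and positive. This follows because $\sff$ is positive on the closed affine subspace $v+P$, which does not contain $0$ (as $v\notin P$), and $\sff$ grows quadratically, so it is coercive on that subspace.
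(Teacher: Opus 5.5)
Your proof is correct and is essentially the paper's: the paper also slices $V=\bigcup_{\lambda\in\bC}(\lambda v+P)$, normalizing $\langle\beta,v\rangle=1$, to reduce $\sgg(\beta)$ to $\sup_{\lambda}\big(\re\lambda-\sk_\sff(\lambda v+P)\big)$, and then simply asserts that the Legendre transform of a hermitian metric on the line $V/P$ is its dual, whereas you carry out the one-variable maximization explicitly. Your factor-of-$4$ observation is right---with (3.2) taken literally one gets $\sgg\circ\pi_V=\tfrac14\,\sk_\sff^*$, and the paper's appeal to ``Legendre transform equals dual'' silently carries the same constant---so the proposition holds exactly once (3.2) is normalized with $2\re\langle\beta,v\rangle$, and otherwise only up to the factor $4$; this is harmless because the later uses (Propositions 3.2 and 3.3) are insensitive to positive constant factors.
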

\begin{proof}
Let ${P}\in\bP V$ and $\xi\in O_{\bP V}(-1)_{P}$ a nonzero vector, 
$\beta=\pi_V(\xi)\in V^*\setminus\{0\}$. By the construction of $\pi_V$, the kernel of $\beta$ is ${P}$.
Fix $v\in V$ so that
$\langle\beta,v\rangle=1$. 
Since $V=\bigcup_{\lambda\in\bC}\lambda v+P$, 
%(3.2) gives
\begin{align*}
(\sgg\circ\pi_V)(\xi)&=\sgg(\beta)=
\sup\big\{\re\langle\beta,w\rangle-\sff(w):\lambda\in\bC,\,w\in \lambda v+{P}\big\}\\
&=\sup_{\lambda\in\bC} (\re\lambda-\min_{\lambda v+{P}} \sff)
=\sup_{\lambda\in\bC}\big(\re\lambda-\sk_\sff(\lambda v+{P})\big).
\end{align*}
The last expression is the value at $\xi$ of the Legendre transform of the restriction $\sk_\sff|O_{\bP V}(1)_{P}$. 
The restriction is
a hermitian metric and its Legendre transform is its dual; this proves the claim.
\end{proof}

\begin{prop} %3.2
$\sgg\circ\pi_V$ is negatively and, equivalently by Proposition 3.1, $\sk_\sff$ is positively curved.
\end{prop}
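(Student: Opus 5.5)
We must show that $\sgg\circ\pi_V$ is negatively curved in the sense of Section 2. Concretely, it should be smooth, and strongly plurisubharmonic on $O_{\bP V}(-1)$ away from the zero section. Positivity of $\sk_\sff$ then follows by definition, since by Proposition 3.1 its dual metric is $\sgg\circ\pi_V$.

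The plan is to transfer everything to $V^*$ via $\pi_V$. By the discussion preceding Proposition 3.1, $\pi_V$ restricts to a biholomorphism $O_{\bP V}(-1)\setminus\bP V\to V^*\setminus\{0\}$. Plurisubharmonicity and strong plurisubharmonicity are invariant under biholomorphic pullback. It therefore suffices to show that $\sgg$ is smooth and strongly plurisubharmonic on $V^*\setminus\{0\}$, i.e. $i\partial\bar\partial\sgg>0$ there. This rests on the facts, already stated in the paper, that $\sgg$ is smooth and strongly convex on $V^*\setminus\{0\}$, meaning its real Hessian is positive definite. For any real function the Levi form is an average of the real Hessian: for a complex direction $\zeta$,
\[
\partial\bar\partial\sgg(\zeta,\bar\zeta)=\tfrac14\big(D^2\sgg(\zeta,\zeta)+D^2\sgg(i\zeta,i\zeta)\big).
\]
A positive definite real Hessian therefore gives a positive definite Levi form. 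Smoothness of $\sgg\circ\pi_V$ away from the zero section is immediate from the same biholomorphism.

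Two points remain to be checked. The first is plurisubharmonicity across the zero section $\bP V$, where $\pi_V$ collapses everything to $0\in V^*$. Since $\sgg$ is convex on all of $V^*$, it is plurisubharmonic there. The composition of a plurisubharmonic function with a holomorphic map is plurisubharmonic, so $\sgg\circ\pi_V$ is plurisubharmonic on all of $O_{\bP V}(-1)$. Smoothness near the zero section is also needed for the metric to be smooth. Locally on $\bP V$, choose a holomorphic nonvanishing section $e$ of $O_{\bP V}(-1)$. Then $\sgg\circ\pi_V(\lambda e(P))=|\lambda|^2\sgg(\pi_V e(P))$ by $2$-homogeneity, and $\sgg(\pi_V e(P))$ is smooth and positive because $\pi_V e(P)\neq0$. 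So the metric is smooth everywhere.

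The second point, which I expect to be the only real subtlety, is the justification that $\sgg$ is in fact smooth and strongly convex off $0$, which the paper asserts without proof. If this must be verified, I would argue via the Legendre duality. Since $\sff$ is strongly convex and $2$-homogeneous, its gradient map $v\mapsto d\sff(v)$ is a diffeomorphism $V\setminus\{0\}\to V^*\setminus\{0\}$, with derivative the nondegenerate Hessian, together with properness and homogeneity. Then $\sgg(\beta)=\re\langle\beta,v\rangle-\sff(v)$ at $v=(d\sff)^{-1}(\beta)$, and the inverse function theorem shows that $\sgg$ is smooth with Hessian inverse to that of $\sff$, hence positive definite. Once this is granted, the proposition reduces to the linear-algebra identity above.
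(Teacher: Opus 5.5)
Your proof is correct and follows the paper's route. The paper obtains Proposition 3.2 as the special case (base a point) of Proposition 3.3, whose proof is exactly your argument: pull $\sgg$ back along $\pi_V$, and use that $\sgg$ is convex (hence plurisubharmonic) and, off $0$, smooth and strongly convex (hence strongly plurisubharmonic). Your extra checks---the Levi form as an average of the real Hessian, smoothness across the zero section via a local frame, and the Legendre-duality argument for the regularity of $\sgg$---fill in details the paper takes for granted.
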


This is a special case of Proposition 3.3 below.---In section 2 we associated with any isomorphism $\phi:V'\to V$ 
a map $\phi_\sK:\sK_V\to\sK_{V'}$. It is immediate that
\begin{equation} %3.3
\phi_\sK\sk_\sff=\sk_{\sff\circ\phi}.
\end{equation}

If $E\to S$ is a holomorphic vector bundle, the above constructions can be done fiberwise. We first obtain a
holomorphic map $\pi_E:O_{\bP E}(-1)\to E^*$, biholomorphic between $O_{\bP E}(-1)\setminus\bP E$ and
$E^*\setminus S$. Second, suppose $f:E\to[0,\infty)$ is convex and absolutely $2$-homogeneous on the fibers,
and away from the zero section it is smooth, positive, and fiberwise strongly convex. Often such $f$ are
called (convex) Finsler metrics. Applying (3.1) to each restriction $f|E_s$ gives rise to a metric on $O_{\bP E}(1)$,
that we denote $k_f$.

\begin{prop} %3.3
The fiberwise Legendre transform $g:E^*\to[0,\infty)$ of $f$ is plurisubharmonic if and only if $k_f$ is semipositively
curved; $g$ is smooth and strongly plurisubharmonic away from the zero section
if and only if $k_f$ is positively curved.
\end{prop}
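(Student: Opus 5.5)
The plan is to reduce everything to the fiberwise duality of Proposition 3.1 and the biholomorphism $\pi_E$.

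First, applying Proposition 3.1 on each fiber $E_s$ shows that the dual metric of $k_f$ on $O_{\bP E}(-1)$ is $k_f^*=g\circ\pi_E$. By definition, $k_f$ is (semi)positively curved exactly when $k_f^*$ is (semi)negatively curved. For a line bundle this means that $k_f^*$, or $\log k_f^*$, is plurisubharmonic on the total space $O_{\bP E}(-1)$; in the strict version, $k_f^*$ is smooth and strongly plurisubharmonic off the zero section. So the task becomes comparing plurisubharmonicity of $g\circ\pi_E$ on $O_{\bP E}(-1)$ with that of $g$ on $E^*$.

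Second, we use that $\pi_E$ is holomorphic and restricts to a biholomorphism $O_{\bP E}(-1)\setminus\bP E\to E^*\setminus S$.

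For the smooth and strict statements: the function $g$ is smooth away from the zero section, because it is a fiberwise Legendre transform of a fiberwise strongly convex $f$ that is smooth off the zero section. Plurisubharmonicity and strong plurisubharmonicity are biholomorphic invariants. Hence $g$ is smooth and strongly psh on $E^*\setminus S$ if and only if $g\circ\pi_E$ is so on $O_{\bP E}(-1)\setminus\bP E$, which is the definition of negative curvature of $k_f^*$.

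For the semi version, off the zero sections the same biholomorphism argument applies. It remains to handle the zero sections, where both functions vanish and are continuous and nonnegative. A continuous function that is psh off a closed subset and equals its minimum value $0$ on that subset is psh everywhere. Indeed, $\max(u,\varepsilon)$ is psh for every $\varepsilon>0$, since near the zero set it equals the constant $\varepsilon$. Letting $\varepsilon\to0$ gives a decreasing limit of psh functions, which is psh. Applying this on both sides closes the equivalence.

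The main obstacle is justifying the smoothness of $g$ and the continuity needed in the extension step. I would argue as follows. Fiberwise, $\nabla\sff$ is a diffeomorphism $V\setminus0\to V^*\setminus0$, since $\sff$ is strongly convex and 2-homogeneous. Then $\sgg(\beta)=\re\langle\beta,v\rangle-\sff(v)$ with $v=(\nabla\sff)^{-1}(\beta)$. By the implicit function theorem, this inverse depends smoothly on parameters along $S$ in local trivializations. This also gives continuity of $g$ up to the zero section, via the bound $g\le C|\beta|^2$ coming from homogeneity.
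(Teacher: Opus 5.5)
Your proposal is correct and follows the paper's proof. You identify $k_f^*=g\circ\pi_E$ via Proposition 3.1, then transfer (strong) plurisubharmonicity through the holomorphic map $\pi_E$, which is biholomorphic off the zero sections. Your $\max(u,\varepsilon)$ argument for extending plurisubharmonicity across the zero section, together with the continuity bound $g\le C|\beta|^2$, fills in a step the paper's one-line proof leaves implicit.
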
 

A dual version of the proposition was proved by Kobayashi \cite[Theorem 4.1]{K75}. 
\begin{proof}
By Proposition 3.1 $k_f^*=g\circ\pi_E:O_{\bP E}(-1)\to[0,\infty)$. This latter is plurisubharmonic if and
only if $g$ is; it is smooth and strongly plurisubharmonic away from $\bP E$ if and only if away from the
zero section $g$ is.
\end{proof}

Finally, consider a holomorphic isomorphism $\phi:E'\to E$ of vector bundles $E'\to S'$, $E\to S$. Thus there is
a biholomorphism $b:S'\to S$ and the restrictions of $\phi$ produce isomorphisms $\phi_s:E'_s\to E_{b(s)}$ of
vector spaces for all $s\in S'$. Out of metrics $h$ on $E$ and $k$ on $O_{\bP E}(1)$ one can construct metrics
$h'$ on $E'$ and $k'$ on $O_{\bP E'}(1)$ by applying $(\phi_s)_\sH$, respectively, $(\phi_s)_\sK$ on each fiber:
\[
h'|E'_{s}=(\phi_s)_\sH(h|E_{b(s)}),\qquad k'|O_{\bP E'_{s}}(1)=(\phi_s)_\sK\big(k|O_{\bP E_{b(s)}}(1)\big).
\]
Then $(E',h')$ and $(E,h)$ on the one hand, and $\big(O_{\bP E'}(1),k'\big)$ and $\big(O_{\bP E}(1), k\big)$ 
on the other are isometrically isomorphic. Further down we will use the notation
\begin{equation} %3.4
h'=\phi_\sH h,\qquad k'=\phi_\sK k.
\end{equation}

\section{A variant of Theorem 2.1} %Section 4

Most of the construction underlying Theorem 2.1 will involve not a bundle over a projective manifold but a trivial vector
bundle $F=B\times V\to B$ of rank $r$ over the unit ball $B$ in some Euclidean space $\bC^m$, 
$m=1,2,\dots$ and $r=2,3,\dots$ arbitrarily fixed. We write  $||\,\,||$ for the Euclidean norm on $\bC^m$.

\begin{thm} %4.1
There are a positively curved metric $k$ on $O_{\bP F}(1)$ and positive numbers $\rho<\rho'<1$ such that
\newline\phantom{if }
(a) with any $\GL(\bC^r )$ equivariant map
$\Psi:\sK_{\bC^r}\to\sH_{\bC^r}$ the metric $\Psi k$ on $F|\rho B$ fails to have semipositive curvature; %and
\newline\phantom{if } 
(b) over $B\setminus\rho' B$, $k$ is of form $k_h$ (cf. (3.1)), with $h$ a smooth metric on
$F|(B\setminus\rho' B)$.
\end{thm}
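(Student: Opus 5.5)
The plan is to exploit equivariance together with a large symmetry group, so that $\Psi$ has no freedom at all on the metrics we feed it. Fix a finite subgroup $G\subset\GL(\bC^r)$ that acts irreducibly on $\bC^r$. An example is the group generated by the cyclic permutation of coordinates and the diagonal matrices with $r$-th roots of unity on the diagonal. Choose a convex Finsler metric $\sff_0$ on $\bC^r$ as in section 3 that is $G$-invariant but \emph{not} hermitian. For instance, take a small $G$-invariant, non-hermitian perturbation of $|v|^2$, or a smoothing of a polydisc-type norm. By (3.3), $\phi_\sK\sk_{\sff_0}=\sk_{\sff_0}$ for $\phi\in G$, so equivariance gives $\phi_\sH\Psi\sk_{\sff_0}=\Psi\sk_{\sff_0}$. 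Thus $\Psi\sk_{\sff_0}$ is a $G$-invariant hermitian metric. By Schur's lemma it equals $c\,|\cdot|^2$ for some constant $c>0$ depending on $\Psi$ only.

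Next, let $A:B\to\GL(\bC^r)$ be a smooth, not holomorphic, map and set $f(s,v)=\sff_0(A(s)v)$ on $\rho B$. By (3.3) the metric $k=k_f$ restricts to $\phi_\sK\sk_{\sff_0}$ on $O_{\bP F_s}(1)$ with $\phi=A(s)$. Equivariance therefore yields, for every $\Psi$ simultaneously,
\[
(\Psi k)(s,v)=c\,|A(s)v|^2 .
\]
Since multiplying by the constant $c$ does not change curvature, part (a) reduces to choosing $A$ so that two things hold at once:
\begin{enumerate}
\item[(i)] the fiberwise Legendre transform $g(s,\beta)=\sgg_0\big(A(s)^{*-1}\beta\big)$ is smooth and strongly plurisubharmonic off the zero section, so that $k$ is positively curved by Proposition 3.3;
\item[(ii)] $h_A(s,v)=|A(s)v|^2$ does \emph{not} have semipositive curvature, i.e.\ its dual $|A(s)^{*-1}\beta|^2$ is not plurisubharmonic at some point of $\rho B$.
\end{enumerate}
Scalar factors $e^{\varphi(s)}$ in $A$ help (i) and (ii) equally, so they cannot create this discrepancy. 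The discrepancy must come from the non-hermitian shape of $\sff_0$. I would work to second order at $s=0$ with $A(s)=I+\bar s_1 X+O(|s|^2)$ for a suitable traceless matrix $X$ that does not commute with $G$. I would compute the Levi form of $g$ and of $|A^{*-1}\beta|^2$ in the directions $(\partial_{s_1},\partial_\beta)$. The goal is to exhibit $X$ and $\sff_0$ for which the Hermitian model fails positivity in some direction $\beta$, while the strong convexity of $\sgg_0$ in the $\beta$ variables (its Hessian differs from the identity because $\sff_0$ is not hermitian) absorbs the corresponding mixed terms. Finding this pair is the step I expect to be the main obstacle.

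Concretely, I would first try $r=2$ and $\sgg_0$ close to $\max(|\beta_1|^2,|\beta_2|^2)$ suitably smoothed and symmetrized. Such a $\sgg_0$ is very convex transversally to the diagonals $|\beta_1|=|\beta_2|$. I would choose $X$ mixing $\beta_1,\beta_2$ so that the bad direction of the Hermitian model lies where $\sgg_0$ has large Hessian. If needed, I would add to $A$ a term $e^{-\varepsilon|s|^2}$ to make (i) strict, with $\varepsilon$ small enough to preserve (ii). For higher $r$, I would take a block version of this construction.

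For (b), on $B\setminus\rho' B$ I want $f$ hermitian. I would interpolate on the level of duals: set $g=\chi(|s|)\,g_A+(1-\chi(|s|))\,h^*$, with $h^*$ a suitable hermitian metric on $F^*$, and then multiply by $e^{M\|s\|^2}$ with $M$ large. This weight is a scalar in $\GL(\bC^r)$ at each point. On $\rho B$ it only rescales, so it leaves the conclusion of (ii) intact, since the same weight enters $\Psi k$ as $e^{-M\|s\|^2}$ and we need (ii) to survive. To make this consistent, I would choose $\rho$ so small that the failure in (ii) at $s=0$ is a second-order effect of size independent of the weight, and apply the weight only on $\rho B\le\|s\|\le\rho'$ via a radial function equal to $0$ near $0$. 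A convex combination of fiberwise convex, $2$-homogeneous functions is again of the same type, and it is the Legendre transform of some Finsler $f$. Strong plurisubharmonicity in the transition annulus comes from the large weight, which dominates the bounded error terms produced by derivatives of $\chi$. By Proposition 3.3, $k=k_f$ is then positively curved everywhere, and $k=k_h$ outside $\rho' B$.
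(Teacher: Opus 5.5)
\textbf{Gap: the existence of $\sff_0$ and $A$ satisfying (i) and (ii) is asserted, not shown.}

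Your symmetry reduction is correct, and it is the same device the paper uses to pin down $\sh_1$. If $\sff_0$ is invariant under an irreducible unitary group, equivariance forces $\Psi\sk_{\sff_0}=c|\cdot|^2$, and by (3.3) $\Psi k_f=c\,|A(s)\cdot|^2$ for $f=\sff_0\circ A$. From there on, however, the proof of (a) rests entirely on the claim that some $\sff_0$ and $A$ satisfy (i) and (ii) simultaneously. You do not produce them: you flag this yourself as the main obstacle and offer only a heuristic. This is the whole content of (a), not a routine check.

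To see why it is not automatic, take $m=1$ and write $A(s)^{*-1}=I+\bar s_1Q+|s_1|^2R+\dots$. Then (ii) at $s=0$ says that $Q^*Q-QQ^*+R+R^*$ is not positive semidefinite. Condition (i) says that for each $\beta\ne0$ a certain hermitian form in $(\tau,\eta)$, built from $Q$, $R$ and the first and second derivatives of $\sgg_0$ at $\beta$, is positive definite. Nothing in your outline excludes that (i) forces the hermitian condition for every $G$-invariant $\sgg_0$. If it did, the method would prove nothing.

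\textbf{How the gap can be closed along your lines.} Let $r=2$ and $\sgg_0(\beta)=|\beta|^2+\epsilon|\beta_1|^2|\beta_2|^2/|\beta|^2$ with $\epsilon>0$ small. This is strongly convex, invariant under the torus and the coordinate swap, and not hermitian. Set
\[
A(s)^{*-1}=\begin{pmatrix}1+\lambda|s_1|^2&\bar s_1\\ 0&1+\lambda|s_1|^2\end{pmatrix},
\]
so $R=\lambda I$ and $Q$ is nilpotent with $Qe_2=e_1$, $Qe_1=0$.
\begin{itemize}
\item At $(0,e_1)$ the Levi form of $|A(s)^{*-1}\beta|^2$ in the direction $(\tau,\eta)$ is $2\lambda|\tau|^2+2\re(\bar\tau\eta_2)+|\eta|^2$. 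This is not semipositive for $\lambda<1/2$.
\item The Levi form of $\sgg_0(A(s)^{*-1}\beta)$ at the same point is $2\lambda|\tau|^2+2\re(\bar\tau\eta_2)+|\eta_1|^2+(1+\epsilon)|\eta_2|^2$. This is positive for $\lambda>1/(2+2\epsilon)$.
\item For $\epsilon=0$, $\lambda=1/2$ the matrix $Q^*Q-QQ^*+2\lambda I$ is $\mathrm{diag}(0,2)$, degenerate only along $\bC e_1$, where the $\epsilon$-correction is positive. A perturbation argument then gives strict plurisubharmonicity for all $\beta\ne0$ when $\lambda<1/2$ is close enough to $1/2$, hence over a small ball. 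For $m>1$, add a small factor $e^{\kappa\|s\|^2}$.
\end{itemize}
Your patching for (b) then goes through, provided the radial weight vanishes on a ball where $\sgg_0(A(s)^{*-1}\beta)$ is already strongly plurisubharmonic.

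\textbf{Comparison with the paper.} Once completed, your route gives an explicit pointwise curvature failure of $\Psi k$ on all of $\rho B$, but it requires this jet computation. The paper needs no curvature computation at all.
\begin{itemize}
\item It takes a $\phi$ that contracts the $l^\infty$ norm but not the $l^2$ norm, so that $\sk_{\sff\circ\phi}<\sk_\sff$.
\item Lemma 4.2 glues $\sk_\sff$, $\sk_{\sff\circ\phi}$ and a hermitian $\sk_{\sff_2}$ onto three concentric spheres by regularized maxima. Between the spheres $k$ need not stay in one $\GL$-orbit, since $\Psi k$ is never evaluated there.
\item The maximum principle (Lemma 4.3) would then force $\sh_1\ge\sh_1\circ\phi$, which fails at $(1,0)$.
\item Part (b) comes essentially for free, after a rescaling, because $k$ is a multiple of $\sk_{\sff_2}$ near the outer sphere.
\end{itemize}
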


The proof needs some preparation.

\begin{lem} % 4.2
Given positively curved metrics $\sk_1,\sk_2,\sk_3$ on $O_{\bP V}(1)$ such that $\sk_1>\sk_2>\sk_3$ away from the 
zero section, there are positive numbers $r_1<r_2<r_3<1$ and a positively curved metric $k$ on $O_{\bP F}(1)$ 
such that for any $j=1,2,3$ 
\[k|O_{\bP F_s}(1)=\sk_j\qquad\text {if}\quad s\in B,\,\, ||s||=r_j.
\]
Moreover, if $||s||$ is sufficiently
close to $r_j$, then $k|O_{\bP F_s}(1)=c_s \sk_j$, $c_s>0$ a constant.
\end{lem}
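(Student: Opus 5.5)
Work with the dual metrics on $O_{\bP V}(-1)$, and through $\pi_V$ with functions on $V^*$. Each $\sk_j$ is positively curved, so its dual $\sk_j^*$ is smooth and strongly plurisubharmonic off the zero section. Transported by $\pi_V$, it is a function $\sgg_j:V^*\to[0,\infty)$ that is absolutely $2$-homogeneous, smooth and strongly psh on $V^*\setminus\{0\}$. The inequalities $\sk_1>\sk_2>\sk_3$ become $\sgg_1<\sgg_2<\sgg_3$ off $0$. A metric on $O_{\bP F}(1)$ is positively curved exactly when the corresponding function $G:B\times V^*\to[0,\infty)$ is absolutely $2$-homogeneous in the fiber variable and smooth and strongly psh off $B\times\{0\}$. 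This is the argument in the proof of Proposition 3.3, which uses only $\pi_E$ and not convexity. So the task is to glue $\sgg_1,\sgg_2,\sgg_3$ along the spheres $\|s\|=r_j$ into such a $G$.

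The natural tool is a regularized maximum of psh functions, with $s$-dependent weights. Work with $\log$'s: $u_j=\log\sgg_j$ is psh on $V^*\setminus\{0\}$ and log-homogeneous, $u_j(\lambda\beta)=2\log|\lambda|+u_j(\beta)$. On $V^*\setminus\{0\}$ set
\[
U(s,\beta)=\widetilde\max_j\big(u_j(\beta)+a_j(\|s\|^2)\big),
\]
with $\widetilde\max$ a regularized maximum and $a_j$ convex increasing functions of one real variable. Then $a_j(\|s\|^2)$ is psh in $s$. Each summand, and hence $U$, is psh and log-homogeneous in $\beta$, so $G=e^U$ (extended by $0$) is $2$-homogeneous and psh. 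Choose the $a_j$ so that near $\|s\|=r_j$ the $j$-th term exceeds the other two by more than the smoothing width. Then $U=u_j+a_j$ there, and so $G=e^{a_j}\sgg_j$. This gives the statement's $k|O_{\bP F_s}(1)=c_s\sk_j$ with $c_s=e^{-a_j(\|s\|^2)}$, and $c_s=1$ when $\|s\|=r_j$ if $a_j(r_j^2)=0$.

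Domination at the three radii is possible because, by compactness of the unit sphere of $V^*$, $\sgg_{j+1}/\sgg_j\ge q>1$ there. So the gaps $u_{j+1}-u_j$ are bounded below by $\log q>0$, and by homogeneity also bounded above. Let $a_1$ be constant near $0$ and then decreasing steeply. This is impossible for a convex increasing function, so instead use the equivalent configuration with the roles arranged as increasing: term $1$ wins for small $\|s\|$, term $2$ wins at intermediate radii, term $3$ wins near $r_3$ and beyond. Concretely, take $a_1\equiv0$, $a_2(t)=A(t-t_2)^+$-smoothed, and $a_3(t)=A'(t-t_3)^+$-smoothed, with large slopes. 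At $\|s\|=r_1$ one has $a_2=a_3=0$, and term $1$ dominates only if $u_1>u_2,u_3$. That fails since $u_1<u_2<u_3$.

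So I reverse the indexing: the function with the smallest values must win at the largest radius. I would therefore subtract rather than add. Put $a_1\equiv 0$, $a_2=-c_2+\chi_2$, $a_3=-c_3+\chi_3$ with constants $c_3>c_2>0$ large enough that term $1$ dominates near $r_1$. The convex increasing $\chi_j(\|s\|^2)$ then grow so that term $2$ takes over near $r_2$ and term $3$ near $r_3$. Finally add a common psh correction $\epsilon\|s\|^2$ and a small multiple of a strictly psh term to make $U$ strongly psh in all directions, including the $s$-directions. The main obstacle is the bookkeeping: the slopes of $\chi_2,\chi_3$ must make each takeover happen in a short interval, strictly between the previous and next radii, while keeping $r_3<1$. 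This is where the uniform bounds on the gaps $u_{j+1}-u_j$ are used. Smoothness and strong plurisubharmonicity of $e^U$ off the zero section then follow because $e^U$ is psh with strongly psh log in the fiber directions, plus the added strictly psh term in $s$.
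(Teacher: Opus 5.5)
Your final configuration ($a_1\equiv0$, $a_2=-c_2+\chi_2$, $a_3=-c_3+\chi_3$, plus $\epsilon\|s\|^2$) does lead to a proof, but by a different route from the paper's. The paper never takes logarithms. It modifies the duals affinely ($u_1=\sk_1^*$, $u_2=p_2\sk_2^*+q_2$, $u_3=p_2(p_3\sk_3^*+q_3)+q_2$), takes a regularized maximum $u$ in the fiber variable only, and brings in $s$ through the circular strongly pseudoconvex domain $N=\{\|s\|^2+u(\xi)<1+q\}$. It then defines $\kappa$ as the metric whose unit disc bundle is $N$. Near $\|s\|=r_j$ this gives $\kappa(s,\cdot)=C_j'(C_j-\|s\|^2)^{-1}\sk_j^*$, which is exactly your ansatz with $a_j(t)=-\log(C_j-t)+\mathrm{const}$. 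Because the paper's $u_j$ are not homogeneous, it has to compare them only in shells $1-\var\le\sk_j^*\le1+\var$, prove $du(t\xi)/dt>0$, and pass from a strongly pseudoconvex circular domain to a negatively curved metric. In return, strict positivity in the $s$-directions and explicit radii come for free from the $\|s\|^2$ in the defining function. Your version gets homogeneity of $G=e^U$ directly from $M(t+d)=M(t)+d$, and uniform domination from the boundedness of the $0$-homogeneous gaps $u_{j+1}-u_j$, so no shells or implicit definitions are needed. The price is that you must arrange $s$-positivity and the choice of the $a_j$ and $r_j$ by hand.

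Three things in your write-up need correcting.

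First, $U$ can never be strongly psh in all directions, and $\log\sgg_j$ is not strongly psh in the fiber directions, because $\lambda\mapsto U(s,\lambda\beta)=U(s,\beta)+\log|\lambda|^2$ is harmonic. The right criterion is as follows. In a local frame of $O_{\bP F}(-1)$ write $U=\log|\lambda|^2+\psi$, with $\psi$ a function on $\bP F=B\times\bP V$. Then $i\partial\bar\partial e^U=e^U(i\partial\bar\partial U+i\partial U\wedge\bar\partial U)$ is positive off the zero section if and only if $\psi$ is strictly psh; the second term takes care of the radial direction. Here $\psi=M(\psi_1,\psi_2,\psi_3)$ with $\psi_j=\phi_j(x)+a_j(\|s\|^2)$ and $\phi_j$ strictly psh on $\bP V$. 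Moreover $i\partial\bar\partial M(\psi)\ge\sum_j\partial_jM\,i\partial\bar\partial\psi_j$, with $\partial_jM\ge0$ and $\sum_j\partial_jM=1$. So it suffices that every $a_j$ be convex with $a_j'\ge\epsilon>0$. Put the $\epsilon\|s\|^2$ into the $a_j$ and add nothing else: any extra term that is not log-homogeneous in $\beta$ destroys the $2$-homogeneity of $G$.

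Second, your remark that the function with the smallest values must win at the largest radius is backwards. $\sgg_1$ wins at $r_1$, as in your final configuration and as the maximum principle in $s$ demands. The only repair your first attempt needed was the negative offsets.

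Third, the normalization $a_j(r_j^2)=0$ (which must now include the $\epsilon$-term), together with convexity of $a_2$, forces $r_3$ to be close to $r_2$. Linear weights handle everything. Choose the smoothing parameter $\eta$ of $M$ so that $2\eta$ is less than every $\min(u_j-u_i)$, $i<j$; then $M$ equals an entry that exceeds the others by more than $2\eta$. Take $a_j(t)=S_j(t-r_j^2)$ and fix the constants in this order:
\begin{itemize}
\item $S_1=\epsilon$, with $\epsilon<\min(u_2-u_1)-2\eta$ and $\epsilon<\min(u_3-u_1)-2\eta$;
\item $S_2$, with $S_2(r_2^2-r_1^2)>\max(u_2-u_1)+2\eta$;
\item $r_3<1$, with $S_2(r_3^2-r_2^2)<\min(u_3-u_2)-2\eta$;
\item $S_3$, with $S_3(r_3^2-r_2^2)>\max(u_3-u_2)+2\eta$ and $S_3(r_3^2-r_1^2)>\max(u_3-u_1)+2\eta$.
\end{itemize}
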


In the above formulation we took the liberty of identifying $F_s=\{s\}\times  V$ with $ V$ and so
$O_{\bP F_s}(1)$ with $O_{\bP V}(1)$. 

\begin{proof}
The dual metrics $\sk_1^*<\sk_2^*<\sk_3^*$ on $O_{\bP V}(-1)$ are negatively curved, i.e., smooth and 
strongly plurisubharmonic 
away from the zero section. For $j=2,3$ let $q_j=\max \sk_{j-1}^*/\sk_j^*<1$ (maximum taken over 
$O_{ \bP V}(-1)\setminus\bP V$), and choose $p_j>0$ so that
\begin{equation} %4.1
p_j\sk_j^*+q_j<1\qquad\text{when }\sk_1^*\le 1.
\end{equation}
Let
\begin{equation} %4.2
u_1=\sk_1^*,\qquad u_2=p_2\sk_2^*+q_2,\quad\text{and}\quad u_3=p_2(p_3\sk_3^*+q_3)+q_2,
\end{equation}
functions strongly plurisubharmonic on $O_{\bP V}(-1)\setminus\bP V$. Then
\[
u_1<u_2\quad\text{when }\sk_2^*\le 1,\qquad u_1>u_2\quad\text{when } \sk_1^*\ge 1.
\]
The former follows because  $u_1=\sk_1^*\le q_2\sk_2^*$, while away from the zero section $u_2>q_2$; and
on the zero section $u_1=0$, $u_2=q_2$. As to the latter, if $\sk_1^*(\xi)=1$, (4.1), (4.2) imply
$u_2(\xi)<1=u_1(\xi)$. Further, $u_1(t\xi)-u_2(t\xi)$ is a function of $t\in\bR$ of form $at^2+b$, with
\[
b=u_1(0\cdot\xi)-u_2(0\cdot\xi)<0\quad\text{ and }\quad a+b=u_1(\xi)-u_2(\xi)>0;
\]
whence $a>0$ and $at^2+b>0$ if $|t|\ge 1$. Similarly,
\[
u_2<u_3\quad\text{when } \sk_3^*\le 1,\qquad u_2>u_3\quad\text{when }\sk_2^*\ge 1.
\]
In fact, there is an $\var\in(0,1)$ such that
\begin{equation} %4.3
u_2\begin{cases}>u_1+2\var &\text{when }\sk_2^*\le 1+\var\\<u_1-2\var &\text{when }1-\var\le\sk_1^*\le2,\end{cases}
\qquad u_3\begin{cases}>u_2+2\var &\text{when } \sk_3^*\le 1+\var\\
<u_2-2\var &\text{when }\sk_2^*\ge1-\var,\sk_1^*\le 2.\end{cases}
\end{equation}
In particular, 
$\max(u_1,u_2,u_3)=u_j$ when $1-\var\le\sk_j^*\le 1+\var$.

We now bring in a regularized maximum function $M=M_{\var,\var,\var}:\bR^3\to\bR$. This is a smooth, convex
function, increasing in all three variables, $M(a+d,b+d,c+d)=M(a,b,c)+d$, and
\[
M(a,b,c)\ge\max(a,b,c),
\]
with equality if $|a-b|,|b-c|,|a-c|>2\var$, see \cite[I.5.18]{D12}. It follows that 
$u=M(u_1,u_2,u_3):O_{\bP V}(-1)\to[0,\infty)$ is smooth, and strongly plurisubharmonic away from $\bP V$; 
$u(\lambda\xi)=u(\xi)$ if $\lambda\in\bC$ is unimodular; and $u=u_j$ when $1-\var\le\sk_j^*\le1+\var$ by (4.3). Finally,
if $\xi\in O_{\bP V}(-1)\setminus\bP V$, there is a $\delta>0$ such that $u_j(t\xi)-\delta t^2$ is an increasing function of
$t\in(0,\infty)$, for all $j$. Hence 
\[
u(t\xi)-\delta t^2=M\big(u_1(t\xi)-\delta t^2,u_2(t\xi)-\delta t^2, u_3(t\xi)-\delta t^2\big)
\]
also increases and so $du(t\xi)/dt>0$. This implies on the one hand that $u(t\xi)$ increases with $t$, 
whence $u$ attains its minimum on the zero section,
\[
\min u=q=p_2q_3+q_2;
\]
on the other hand that $\xi$ is a regular point of $u$, and
\[
N=\big\{(s,\xi)\in B\times O_{\bP V}(-1): ||s||^2+u(\xi)<1+q\big\}
\]
is a smoothly bounded strongly pseudoconvex neighborhood of $B\times \bP V\subset B\times O_{\bP V}(-1)$. Also, if
$(s,\xi)\in N$ and $\lambda\in\bC$, $|\lambda|\le 1$, then $(s,\lambda\xi)\in N$. All this implies that there is
a negatively curved metric $\kappa$ on the line bundle $O_{\bP F}(-1)=B\times O_{\bP V}(-1)\to B\times\bP V$ whose 
unit disc bundle is $N$. Thus $\kappa(s,\xi)=1$ if and only if $||s||^2+u(\xi)=1+q$.

Let $r_j>0$ satisfy 
\[
r_1^2=q,\qquad r_2^2=1+q-(p_2+q_2),\qquad r_3^2=1+q-p_2(p_3+q_3)-q_2=1-p_2p_3.
\]
(4.1) implies that $p_j+q_j<1$, and so $r_1<r_2<r_3<1$. We claim that if $s\in B$ and $||s||$ is close to
$r_1,r_2$, respectively, $r_3$, then
\begin{equation} %4.4
\kappa(s,\cdot)=\frac1{1+q-||s||^2}\sk_1^*,\qquad\
\frac{p_2}{1+p_2q_3-||s||^2}\sk_2^*,\qquad\text{resp.\quad}
\frac{p_2p_3}{1-||s||^2}\sk_3^*.
\end{equation}
Note that when $||s||$ is exactly equal to $r_j$, the coefficient of $\sk_j^*$ above is $1$.

We verify the last formula in (4.4), the other two follow similarly. Suppose $||s||$ is close to $r_3$, and take
a $\xi$ with $\sk_3^*(\xi)=(1-||s||^2)/p_2p_3$, which  quantity is close to $1$. Hence
\[
||s||^2+u(\xi)=||s||^2+u_3(\xi)=||s||^2+p_2\Big(\frac{1-||s||^2}{p_2}+q_3\Big)+q_2=1+q,
\]
so $\kappa(s,\xi)=1$. Since $\kappa(s,\cdot)$ and 
$\sk_3^*$ are absolutely $2$-homogeneous, this implies
\[
\kappa(s,\xi)=\frac{p_2p_3}{1-||s||^2}\sk_3^*(\xi)\qquad\text{for all }\xi.
\]
Thus the metric $k=\kappa^*$ on $O_{\bP V}(1)$, dual to $\kappa$, is the metric sought.
\end{proof}

\begin{lem} %4.3
Suppose $0\le\rho_1<\rho_2<\rho$, and a metric $h$ on the trivial bundle $F'=\rho B\times V\to \rho B$ has 
semipositive curvature.  Assume that
\[
h(s,\cdot)=h(s',\cdot) \qquad\text{whenever}\quad ||s||=||s'||=\rho_2.
\]
Then $h(s,\cdot)\ge h(s',\cdot)$ whenever $||s||=\rho_1$, $||s'||=\rho_2$.
\end{lem}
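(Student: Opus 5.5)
Semipositive curvature of $h$ means, by definition, that the dual metric $h^*$ on $F'^*=\rho B\times V^*$ has seminegative curvature, i.e.\ $h^*$ is plurisubharmonic on $\rho B\times V^*$. Fix $\beta\in V^*$ and consider the function $\phi(s)=\log h^*(s,\beta)$ on $\rho B$. Since $h^*$ is plurisubharmonic and fiberwise a hermitian metric, $\log h^*$ is plurisubharmonic on $F'^*$. Restricting to the holomorphic section $s\mapsto(s,\beta)$, $\phi$ is plurisubharmonic on $\rho B$; so is $h^*(\cdot,\beta)$ itself, and either one will do.

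The hypothesis that $h(s,\cdot)$ is independent of $s$ on the sphere $||s||=\rho_2$ passes to the dual. Hence $h^*(s,\beta)=c(\beta)$ is constant for $||s||=\rho_2$. By the maximum principle for the plurisubharmonic function $s\mapsto h^*(s,\beta)$ on the closed ball $\overline{\rho_2B}$, we get $h^*(s,\beta)\le c(\beta)$ whenever $||s||\le\rho_2$, in particular when $||s||=\rho_1$. (If $\rho_1=0$ the point is the center, still inside the ball.) Thus $h^*(s,\cdot)\le h^*(s',\cdot)$ pointwise on $V^*$ for $||s||=\rho_1$, $||s'||=\rho_2$.

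It remains to dualize back. For hermitian metrics, or indeed any norms squared, $\sh_1^*\le\sh_2^*$ implies $\sh_1\ge\sh_2$. This follows from the formula $\sh(v)=\sup_{\beta\ne0}|\langle\beta,v\rangle|^2/\sh^*(\beta)$, or from the Legendre transform description in Proposition 3.1, since the Legendre transform reverses order. This gives $h(s,\cdot)\ge h(s',\cdot)$.

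The only point requiring care is the maximum principle step. Plurisubharmonic functions that are merely upper semicontinuous still satisfy the maximum principle on a ball: their restriction to each complex line through the center is subharmonic on a disc and bounded by its sup on the boundary circle. So no smoothness of $h$ is needed, and I expect no real obstacle.
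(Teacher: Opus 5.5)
Your proof is correct and follows the paper's own argument. Like the paper, you use that $h^*(\cdot,\beta)$ is plurisubharmonic on $\rho B$, apply the maximum principle on the ball of radius $\rho_2$ where $h^*$ is independent of $s$ on the boundary sphere, and then dualize back using that passing to the dual reverses the order of metrics.
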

 \begin{proof}
 The dual metric $h^*$ on ${F'}^*=\rho B\times V^*\to \rho B$ is seminegatively curved, hence $h^*(\cdot,\xi)$ is 
 a plurisubharmonic
 function on $\rho B$ for any $\xi\in V^*$. By the maximum principle, if $||s||=\rho_1$,
 \[
 h^*(s,\xi)\le\max_{||\sigma||=\rho_2}h^*(\sigma,\xi)=h^*(s',\xi)
 \]
 with any $s'$ of norm $\rho_2$. This implies $h(s,\cdot)\ge h(s',\cdot)$ for the predual metrics.
 \end{proof}
 
 \begin{proof}[Proof of Theorem 4.1]
 We will argue when $V=\bC^2$. Consider the function $\sff_\infty:\bC^2\to[0,\infty)$,
 \[
 \sff_\infty(v)=\max(|v_1|^2,|v_2|^2),\qquad v=(v_1,v_2)\in\bC^2.
 \]
 If $\phi\in\GL(\bC^2)$ is given by\footnote{What matters is that $\phi$ is a contraction in the $l^\infty$
 norm on $\bC^2$, but not in the $l^2$ norm.}
 \[
 \phi(v)=\Big(\frac{3v_1}4+\frac{v_2}5,\frac{3v_1}4-\frac{v_2}5\Big),
 \]
 then $\sff_\infty\circ\phi<\sff_\infty$ on $\bC^2\setminus\{0\}$. Approximate $\sff_\infty$ by an $\sff:\bC^2\to[0,\infty)$ 
that is convex and absolutely
$2$-homogeneous, on $\bC^2\setminus\{0\}$ is smooth, positive, and strongly convex. The approximation should be 
so accurate that $\sff\circ\phi<\sff$ on $\bC^2\setminus\{0\}$. Consider $\psi,\psi_{\sigma\tau}\in\GL(\bC^2)$,
\[
\psi(v_1,v_2)=(v_2,v_1),\qquad\psi_{\sigma\tau}(v_1,v_2)=(e^{i\sigma}v_1,e^{i\tau}v_2),\quad\sigma,\tau\in\bR.
\]
Since $\sff_\infty$ is invariant under these maps, by averaging we can achieve that $\sff$ is invariant, too. Choose
$\var>0$ so that $\sff_2(v)=\var\big(|v_1|^2+|v_2|^2\big)<\sff\big(\phi(v)\big)$ when $v\neq0$.

Let $\sk_j\in\sK_{\bC^2}$, $j=1,2,3$, be given by
\[
\sk_1=\sk_\sff,\qquad\sk_2=\sk_{\sff\circ\phi},\qquad\sk_3=\sk_{f_2}
\]
(cf. Proposition 3.2) and with an equivariant $\Psi:\sK_{\bC^2}\to\sH_{\bC^2}$ let $\sh_j=\Psi(\sk_j)$. 
Thus $\sk_1>\sk_2>\sk_3$. Choose  $r_1<r_2<r_3<1$ and
a positively curved metric $k$ on $O_{\bP F}(1)$  as in Lemma 4.2. 

To understand the curvature of $\Psi k$, note that according to (3.3), 
$\psi_\sK\sk_1=\sk_{\sff\circ\psi}=\sk_\sff=\sk_1$; and
$\sk_1$ is similarly invariant under $(\psi_{\sigma\tau})_\sK$. 
%The same applies to $\sk_3$ (but not to $\sk_2$). 
By equivariance, $\sh_1$ is also invariant under
the corresponding transformations: $\sh_1\circ\psi=\sh_1\circ\psi_{\sigma\tau}=\sh_1$. This means that
in $\sh_1(v)=a|v_1|^2+\re bv_1\overline{v_2}+c|v_2|^2$ the coefficients must satisfy $a=c$, $b=0$,
and so $\sh_1(v)=a\big(|v_1|^2+|v_2|^2\big)$.

Again, by equivariance and by (3.3)
\[
\sh_1\circ\phi=\phi_\sH\big(\Psi (\sk_1)\big)=\Psi(\phi_\sK\sk_\sff)=\Psi(\sk_{\sff\circ\phi})=\sh_2.
\]
Hence $h=\Psi k$ cannot have semipositive curvature over $r_3B$, for if it had, Lemma 4.3 would 
imply $\sh_1\ge\sh_2$,
whereas $\sh_1(v_1,0)=a|v_1|^2$ and
\[
\sh_2(v_1,0)=\sh_1\big(\phi(v_1,0)\big)=\sh_1\Big(\frac{3v_1}4,\frac{3v_1}4\Big)=\frac{18}{16}a|v_1|^2.
\]

This takes care of (a). As to (b),
if $s$ is close to the sphere $\partial(r_3 B)$, then $k|O_{\bP F_s}(1)$ is a constant multiple of $\sk_3$, 
whence of form $\sk_\sh$ with the hermitian metric $\sh$ a multiple of $\sff_2$. 
Therefore taking $R>r_3$ sufficiently close to $r_3$ and replacing $k$ by its pullback along
\[
\bP F=B\times\bP\bC^2\ni(s,x)\mapsto (Rs,x)\in\bP F,
\]
we obtain a new $k$ that, with suitable $\rho,\rho'$, satisfies both (a) and (b).
\end{proof}

\section{Proof of Theorem 2.1} %section 5

We will construct a bundle $E\to S$, a positively curved
metric $k$ on $O_{\bP E}(1)$, and an embedding $\var:B\to S$ so that 
the pullback metric $\var^*k$ on $O_{\bP(\var^*E)}(1)$ is isometric to $k$ of Theorem 4.1. Here and below
$B\subset\bC^m$ is the unit ball, $m=\dim S$.

\begin{lem} %5.1
Let $\Lambda\to S$ be a holomorphic line bundle that admits a positively curved metric, and let 
$0<\sigma<\sigma'<1$. %Let $\dim S=m$ and $B\subset\bC^m$ the unit ball. 
There are a semipositively curved smooth metric $h$ on 
$\Lambda$,  a holomorphic embedding $\var:B\to S$, and a holomorphic section $l$ of $\Lambda|\var(B)$ such 
that $h\circ l=1$ on $\var(\sigma B)$, and $h$ is positively curved on $S\setminus\var(\sigma'B)$.
\end{lem}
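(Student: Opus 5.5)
Start from a smooth positively curved metric $h_0$ on $\Lambda$, and write $h_0=e^{\varphi}|\cdot|^2$ in a local holomorphic frame. Metrics on $\Lambda$ are squared lengths, so positive curvature means the dual metric $h_0^*=e^{-\varphi}$ is strongly plurisubharmonic on the dual line bundle. For a line bundle this is the same as $\varphi$ being strongly plurisubharmonic. Hence $i\partial\bar\partial\varphi>0$, and $i\partial\bar\partial\varphi$ is a Kähler form on $S$.

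Fix a point $s_0\in S$ and holomorphic coordinates $z$ centred at $s_0$, defined on a neighbourhood $U$, together with a nonvanishing holomorphic section $l_0$ of $\Lambda|U$. We may normalize $\varphi$, the weight of $h_0$ relative to $l_0$, so that
\[
\varphi(z)=\re(\text{holomorphic})+Q(z)+O(|z|^3),
\]
where $Q$ is a positive definite hermitian form. Replacing $l_0$ by $e^{-\chi}l_0$, with $\chi$ the holomorphic polynomial absorbing the pluriharmonic part, we may take $\varphi=Q+O(|z|^3)$. After a linear change of coordinates, $Q(z)=A||z||^2$ for some $A>0$.

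Let $\delta>0$ be a small parameter and define the embedding $\var(w)=\delta w$ for $w\in B$. Then $\var(B)\subset U$ once $\delta$ is small, and on $B$ the weight becomes $\varphi(\delta w)=A\delta^2||w||^2+O(\delta^3)$.

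The key step is to flatten $\varphi$ near the origin while keeping it plurisubharmonic. Set
\[
\tilde\varphi=M\big(\varphi,\ A\delta^2c\big)
\]
near $\var(B)$, and $\tilde\varphi=\varphi$ elsewhere. Here $M$ is the two-variable regularized maximum used in the proof of Lemma 4.2, and $c$ is a constant with $\sigma^2<c<\sigma'^2$; concretely, take $c=(\sigma^2+\sigma'^2)/2$, with regularization parameter $\eta A\delta^2$ for a small fixed $\eta>0$. The following properties then need to be checked.

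First, on $\var(\sigma B)$ we have $\varphi\le A\delta^2\sigma^2+O(\delta^3)$, which is less than $A\delta^2c-2\eta A\delta^2$ for $\eta$ small and then $\delta$ small. So $\tilde\varphi$ is the constant $A\delta^2c$ there.

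Second, on $\var(B)\setminus\var(\sigma'B)$ we have $\varphi\ge A\delta^2\sigma'^2-O(\delta^3)$, which exceeds $A\delta^2c+2\eta A\delta^2$. So $\tilde\varphi=\varphi$ there, and the definitions glue smoothly to the global metric $h=e^{\tilde\varphi}|\cdot|^2$, which equals $h_0$ outside $\var(\sigma'B)$.

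Third, $M$ is convex and increasing in each variable, and both arguments are plurisubharmonic. Hence $\tilde\varphi$ is plurisubharmonic everywhere, so $h$ is semipositively curved. Outside $\var(\sigma'B)$ it coincides with $h_0$, so it is positively curved there.

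Finally, take $l=e^{-A\delta^2c/2}l_0$, restricted to $\var(B)$. Then $h\circ l=1$ on $\var(\sigma B)$.

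The only delicate point is uniformity: the cubic error term must be $o(\delta^2)$ uniformly on $\var(B)$, so that the two strict inequalities above hold. This follows from the Taylor estimate once $\eta$ is fixed first and $\delta$ is chosen afterwards. Smoothness of $h$ across $\partial\var(B)$ is not an issue, because the regularized maximum already equals $\varphi$ on the annulus $\var(B)\setminus\var(\sigma'B)$.
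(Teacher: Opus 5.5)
Your first paragraph has a sign error that, taken literally, breaks the argument. With $h_0=e^{\varphi}|\cdot|^2$ relative to $l_0$, the dual metric is $e^{-\varphi}|\mu|^2$ in the dual frame. This is strongly plurisubharmonic off the zero section if and only if $-\varphi$, not $\varphi$, is strongly plurisubharmonic. So under your convention the normalized weight is $-A\|z\|^2+O(|z|^3)$. The inequality you need on the outer shell $\var(B)\setminus\var(\sigma'B)$ then fails, and $M(\varphi,A\delta^2c)$ is not the weight of a semipositively curved metric.

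The repair is purely notational:
\begin{itemize}
\item write $h_0=e^{-\varphi}|\cdot|^2$, which is the paper's $u=-\log h_1\circ l_1$;
\item normalize with the frame $e^{\chi}l_0$ instead of $e^{-\chi}l_0$;
\item put $h=e^{-\tilde\varphi}|\cdot|^2$ with $\tilde\varphi=M(\varphi,A\delta^2c)$;
\item take $l=e^{A\delta^2c/2}l_0$.
\end{itemize}
Then $h\circ l=1$ on $\var(\sigma B)$, and the dual metric $e^{\tilde\varphi}|\mu|^2$ is plurisubharmonic.

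After this fix your proof is correct. It is the paper's construction with one step dropped: normalize the weight to a hermitian form plus $O(|z|^3)$, take a regularized maximum with a constant at scale $\delta^2$, and embed $B$ by a shrinking linear map.

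The paper first interpolates $v_t=\chi_tH+(1-\chi_t)v$, so that the weight equals the exact hermitian form $H$ near the centre. It must then check, via (5.1), that this cutoff preserves strong plurisubharmonicity, using $H-v=O(t^3)$ and $d(H-v)=O(t^2)$ against $d\chi_t=O(1/t)$ and $\partial\bar\partial\chi_t=O(1/t^2)$. Only then does it take the regularized maximum with a constant. As a result, the regions where $w$ is constant or equals $v_t$ are exact sublevel sets of $H$.

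You instead apply $M$ directly to the weight and absorb the $O(\delta^3)$ Taylor error into the $\pm 2\eta A\delta^2$ margins that $M$ needs in order to reduce to one of its arguments. $M$ only requires strict separation on the two regions, with no control of derivatives of the error, so this works and is shorter. The exactness of the paper's level sets is not needed for the lemma.
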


\begin{proof} On some open $U\subset S$ choose local coordinates $(s_1,s_2,\dots,s_m):U\to\bC^m$, 
whose origin
$0\in U$. Identifying $s\in U$ with $\big(s_1(s),\dots,s_m(s)\big)\in\bC^m$ we view $U$ as a neighborhood of 
$0\in\bC^m$. If $U$ is sufficiently small, $\Lambda|U$ admits a nowhere vanishing holomorphic section $l_1$.

Let $h_1$ be a positively curved metric on $\Lambda$. Then $u=-\log h_1\circ l_1$ is strongly 
plurisubharmonic
in $U$. Its Taylor approximation can be written, with a linear form $L$, a quadratic form $Q$, and a hermitian
metric $H$ on $\bC^m$, as
\[
u(s)=u(0)+\re\big(L(s)+Q(s)\big)+H(s)+O(||s||^3)\qquad\text{as } s\in U\subset\bC^m\text{ tends to } 0.
\]
Let $l_2(s)= e^{\left(u(0)+L(s)+Q(s)\right)/2}l_1(s)$. This is a holomorphic section of $\Lambda|U$, and
\[
v(s)=-\log h_1\big(l_2(s)\big)=H(s)+O(||s||^3).
\]
%Thus $v-H$ and $d(v-H)$ vanish at $0$.

With a smooth $\chi:\bC^m\to[0,1]$ supported in $\{s: H(s)<{\sigma'}^2\}$, such that $\chi=1$ in a neighborhood of
$\{s:H(s)\le\sigma^2\}$, and with a positive number $t$, let $\chi_t(s)=\chi(s/t)$ and
\[
v_t=\chi_tH+(1-\chi_t)v\in C^\infty(U).
\]
We compute
\begin{equation} %5.1
\begin{aligned}
i\partial\bar\partial v_t=&\chi_t i\partial\bar\partial H+(1-\chi_t) i\partial\bar\partial v+i\partial\chi_t
\wedge \bar\partial(H-v)\\
&+i\partial(H-v)\wedge\bar\partial\chi_t+i(H-v)\partial\bar\partial \chi_t.
\end{aligned}
\end{equation}

Both $i\partial\bar\partial H$ and $i\partial\bar\partial v$ are (strictly) positive. If $\omega>0$ is a real $(1,1)$ form
such that $i\partial\bar\partial H,i\partial\bar\partial v\ge\omega$, then the sum of the first two terms on the right of
(5.1) is $\ge\omega$. On the set where $H(s)\ge (t\sigma')^2$ the remaining terms are $0$. If
$H(s)<(t\sigma')^2$ then $H(s)-v(s)=O(||s||^3)=O(t^3)$ and $d\big(H(s)-v(s)\big)=O(t^2)$; while 
$d\chi_t=O(1/t)$ and $\partial\bar\partial \chi_t=O(1/t^2)$. It follows that the last three terms in (5.1) tend to
$0$ as $t\to 0$. We conclude that if $t>0$ is small, $v_t$ is strongly plurisubharmonic. 
Fix such a $t$, which additionally
has the property that $\{s\in\bC^m:H(s)<t^2\}\subset U$. Fix also a positive $\delta<t^2({\sigma'}^2-\sigma^2)/6$ so that
\begin{equation*}
v_t(s)=\begin{cases} H(s) &\text{when } H(s)\le (t\sigma)^2+5\delta\\
v(s) &\text{when } H(s)\ge(t\sigma')^2-\delta.\end{cases}
\end{equation*}
It follows that
\begin{equation*}
(t\sigma)^2+2\delta>v_t+2\delta\quad\text{if }H< (t\sigma)^2,\qquad (t\sigma)^2+2\delta<v_t-2\delta\quad\text{if }
4\delta< H-(t\sigma)^2< 5\delta.
\end{equation*}

As in the proof of Lemma 4.2 we take a regularized maximum function, but of two variables:
$M=M_{\delta,\delta}:\bR^2\to\bR$ (see \cite[I.5.18]{D12}), and define
\begin{equation*}
w=\begin{cases} M\big(v_t,(t\sigma)^2+2\delta\big)&\text{if } s\in U,\,\,H(s)<(t\sigma)^2+5\delta\\
v_t&\text{if }s\in U,\,\, H(s)\ge(t\sigma)^2+5\delta.\end{cases}
\end{equation*}
Since $M\big(v_t,(t\sigma)^2+2\delta\big)=v_t$ when $4\delta< H-(t\sigma)^2< 5\delta$, our $w$ is
smooth on $U$. The metric $h$ on $\Lambda$ defined by
\[
h=\begin{cases} h_1&\text{over } S\setminus\{s\in U:H(s)<(t\sigma')^2\}\\
e^{v-w}h_1&\text{over }\{s\in U:H(s)<(t\sigma')^2\}\end{cases}
\]
is also smooth, since $v=v_t=w$ in a neighborhood of the set $\{s\in U:H(s)=(t\sigma')^2\}$. It is positively curved 
where it is defined as $h_1$, and it is semipositively curved on $\{s\in U:H(s)<(t\sigma')^2\}$, because there
\[
-\log h\circ l_2=w-v-\log h_1\circ l_2=w=M\big(v_t,(t\sigma)^2+2\delta\big)\text{ or } v_t,
\]
and both $v_t$ and the regularized maximum of two plurisubharmonic functions are plurisubharmonic. 

Where $H<(t\sigma)^2$, $w=(t\sigma)^2+2\delta$ and
$\log h\circ l_2=v-w+\log h_1\circ l_2=-(t\sigma)^2-2\delta$. Hence
\[
h\circ l=1 \qquad\text{with}\quad l=e^{(t\sigma)^2/2+\delta}l_2.
\]
Finally, there is a linear map $\bC^m\to\bC^m$ whose restriction $\var$ to $B\subset\bC^m$ sends $B$ to
$\{s\in U:H(s)<t^2\}\subset S$. The metric $h$, the embedding $\var$, and the section $l$ 
constructed then satisfy the requirements of the lemma.
\end{proof}

\begin{proof}[Proof of Theorem 2.1]
In the proof we will avail ourselves of the following nonstandard language. Given a holomorphic
vector bundle $F\to T$ and $U\subset T$, we will say the part of $O_{\bP F}(1)$ ``over $U$'' to mean
$O_{\bP(F|U)}(1)$. So  ``over'' refers not to the bundle projection of $O_{\bP F}(1)$, but to the
composition $O_{\bP F}(1)\to\bP F\to T$.

Given $r=2,3,\dots$, consider the trivial bundle $F=B\times\bC^r\to B$, and on $O_{\bP F}(1)$ construct
a positively curved metric as $k$ in Theorem 4.1---but now we will denote 
this metric $k_1$. There are $0<\rho<\rho'<1$ such 
that with any $\GL(\bC^r)$ equivariant map $\Psi:\sK_{\bC^r}\to\sH_{\bC^r}$ the metric $\Psi k_1$ on
$F|\rho B$ fails to have semipositive curvature; and there is a smooth metric $h_1$ on $F|(B\setminus\rho'B)$
such that $k_1=k_{h_1}$. Proposition 3.3 implies that $h_1$ is positively curved.

Our projective algebraic manifold $S$ supports a positive line bundle $\Lambda$. Choose real numbers
$\sigma,\sigma'$ so that $\rho'<\sigma<\sigma'<1$, and construct a semipositively curved smooth metric $h$
on $\Lambda$, an embedding $\var:B\to S$, and a holomorphic section $l$ of $\Lambda|\var(B)$ as in
Lemma 5.1. We identify $B$ with its image $\var(B)$, so that we view $B$ as an open subset of $S$.

Extend $F\to B$ to the trivial vector bundle $S\times\bC^r\to S$, that we will keep denoting $F$.
Choose a smooth metric $h_2$ on this extended $F$ that agrees with $h_1$ over $\sigma'B\setminus\rho'B$.
As the curvature of $h$ over $S\setminus\sigma'B$ is positive, there is a choice of $p\in\bN$ such that
the metric $h_3=h^{\otimes p}\otimes h_2$ on $\Lambda^p\otimes F$ has positive curvature over
$S\setminus\sigma'B$. Let $E=\Lambda^p\otimes F$. The section $l$ of $\Lambda|B$ induces an 
isomorphism $\phi: F|B\to E|B$ by setting \(\phi(\xi)=l(s)^{\otimes p}\otimes\xi\) if $\xi\in F_s$.
We compute $h_3\circ\phi$:
\[
h_3\big(\phi(\xi)\big)=h_3(l(s)^{\otimes p}\otimes \xi\big)=h\big(l(s)\big)^p h_2(\xi),\qquad s\in B,\, \xi\in F_s.
\]
Over $\sigma B\setminus\rho'B$ this gives $h_3\circ\phi=h_2=h_1$, and so
$k_1=k_{h_1}=k_{h_3\circ\phi}=\phi_\sK k_{h_3}$ by (3.3). Therefore
\[
k=\begin{cases}\phi_\sK^{-1}k_1&\text{over }\sigma B\\ k_{h_3}&\text{over } S\setminus\rho'B\end{cases}
\]
defines a smooth metric on $O_{\bP E}(1)$.

Now $\big(O_{\bP E}(1),k\big)$ and $\big(O_{\bP F}(1),k_1\big)$ are isometrically
 isomorphic over $\sigma B$. In view of what we know about $k_1$, first this means that  for no equivariant  
$\Psi:\sK_{\bC^r}\to\sH_{\bC^r}$ has the metric $\Psi k$ semipositive curvature. Second, it means that
$k$ has positive curvature over $\sigma B$.
Over $S\setminus\sigma B$, $h_3=h^{\otimes p}\otimes h_2$ is
positively curved: over $S\setminus \sigma'B$ by the choice of $p$, over $\sigma'B\setminus\sigma B$ because 
$h_3=h^{\otimes p}\otimes h_1$, and $h$ is semipositively, while $h_1$ is positively curved there. By Proposition
 3.3 $k=k_{h_3}$ is positively curved over $S\setminus\sigma B$ as well. In sum, $k$ fulfills the
 requirements of the theorem.
\end{proof}

\end{document}